\theoremstyle{plain}
\newtheorem{theorem}{Theorem}[section]
\newtheorem{lemma}[theorem]{Lemma}
\theoremstyle{definition}
\newtheorem{notation}[theorem]{Notation}
\newtheorem{example}[theorem]{Example}
\newtheorem{definition}[theorem]{Definition}
\theoremstyle{remark}
\newtheorem{remark}[theorem]{Remark}
\newcommand{\lref}[1]{Lemma \ref{#1}}
\newcommand{\tref}[1]{Theorem \ref{#1}}
\newcommand{\cref}[1]{Corollary \ref{#1}}
\newcommand{\rref}[1]{Remark \ref{#1}}
\newcommand{\fref}[1]{Figure \ref{#1}}
\def\vt{\vartheta}
\newcommand{\sis}[2]{^{#1}\!\zeta_{#2}}
\begin{document}


\title[Radicals and Ideals of Affine Near-semirings over Brandt Semigroups]{Radicals and Ideals of Affine Near-semirings \\ over Brandt Semigroups}
\author[Jitender Kumar, K. V. Krishna]{Jitender Kumar and  K. V. Krishna}
\address{Department of Mathematics, Indian Institute of Technology Guwahati, Guwahati, India}
\email{\{jitender, kvk\}@iitg.ac.in}


\begin{abstract}
This work obtains all the right ideals, radicals, congruences and ideals of the affine near-semirings over Brandt semigroups.
\end{abstract}

\subjclass[2000]{Primary 16Y99; Secondary 16N80, 20M11.}

\keywords{Near-semirings, Ideals, Radicals}

\maketitle

\section{Introduction}

An algebraic structure $(\mathcal{N}, +, \cdot)$ with two binary operations $+$ and $\cdot$ is said to be a near-semiring if $(\mathcal{N}, +)$ and $(\mathcal{N}, \cdot)$ are semigroups and $\cdot$ is one-side, say left, distributive over $+$, i.e. $a \cdot (b + c) = a \cdot b + a \cdot c$, for all $a,b,c \in \mathcal{N}$. Typical examples of near-semirings are of the form $M(S)$, the set of all mappings on a semigroup $S$. Near-semirings are not only the natural generalization of semirings and near-rings, but also they have very prominent applications in computer science. To name a few: process algebras by Bergstra and Klop \cite{b.bergstra90}, and domain axioms in near-semirings by Struth and Desharnais \cite{a.jules08}.

Near-semirings were introduced by van Hoorn and van Rootselaar as a generalization of near-rings \cite{a.hoorn67}. In \cite{a.hoorn70}, van Hoorn generalized the concept of Jacobson radical of rings to zero-symmetric near-semirings. These radicals also generalize the radicals of near-rings by Betsch \cite{t.betsch63}. In this context, van Hoorn introduced fourteen radicals of zero-symmetric near-semiring and studied some relation between them. The properties of these radicals are further investigated in the literature (e.g. \cite{t.kvk05a,a.zulfi09}). Krishna and Chatterjee developed a radical (which is similar to the Jacobson radical of rings) for a special class of near-semirings to test the minimality of linear sequential machines in \cite{a.kvk05}.

In this paper, we study the ideals and radicals of the zero-symmetric affine near-semiring over a Brandt semigroup. First we present the necessary background material in Section 2. For the near-semiring under consideration, we obtain the right ideals in Section 3 and ascertain all radicals in Section 4. Further, we determine all its congruences and consequently obtain its ideals in Section 5.

\section{Preliminaries}

In this section, we provide a necessary background material through two subsections. One is to present the notions of near-semirings, and their ideals and radicals. In the second subsection, we recall the notion of the affine near-semiring over a Brandt semigroup. We also utilize this section to fix our notations which used throughout the work.

\subsection{A near-semiring and its radicals} In this subsection, we recall some necessary  fundamentals of near-semirings from \cite{t.kvk05a,a.hoorn70,a.hoorn67}.

\begin{definition}
An algebraic structure $(\mathcal{N}, +, \cdot)$ is said to be a \emph{near-semiring} if
\begin{enumerate}
\item $(\mathcal{N}, +)$ is a semigroup,
\item $(\mathcal{N}, \cdot)$ is a semigroup, and
\item $a \cdot (b + c) = a \cdot b + a \cdot c$, for all $a,b,c \in \mathcal{N}$.
\end{enumerate}
Furthermore, if there is an element $0 \in \mathcal{N}$ such that
\begin{enumerate}
\item [(4)] $a + 0  =  0 + a = a$ for all $a \in \mathcal{N}$, and
\item [(5)] $a \cdot 0 = 0 \cdot a = 0 $ for all $a \in \mathcal{N}$,
\end{enumerate}
then $(\mathcal{N}, +, \cdot)$ is called a \emph{zero-symmetric near-semiring}.
\end{definition}

\begin{example}
Let $(S, +)$ be a semigroup and $M(S)$ be the set of all functions on $S$. The algebraic structure $(M(S), +, \circ)$ is a near-semiring, where $+$ is point-wise addition and $\circ$ is composition of mappings, i.e., for $x \in S$ and $f,g \in M(S)$,
\[x(f + g)= x f + x g \;\;\;\; \text{and}\;\;\;\; x(f \circ g) = (x f)g.\]
We write an argument of a function on its left, e.g. $xf$ is the value of a function $f$ at an argument $x$. We always denote the composition $f \circ g$ by $fg$. The notions of homomorphism and subnear-semiring of a near-semiring can be defined in a routine way.

\end{example}

\begin{definition}
Let $\mathcal{N}$ be a zero-symmetric near-semiring. A semigroup $(S,+)$ with identity $0_S$ is said to be an \emph{$\mathcal{N}$-semigroup}  if there exists a composition \[(s, a) \mapsto sa :  S \times \mathcal{N} \longrightarrow S\] such that, for all $a , b \in \mathcal{N}$ and
$s \in S$,
\begin{enumerate}
\item $s(a+b) = s a + s b$,
\item $s(ab) =  (s a)b$, and
\item $ s 0 = 0_S$.
\end{enumerate}
\end{definition}

Note that the semigroup $(\mathcal{N}, +)$ of a near-semiring $(\mathcal{N}, +, \cdot)$ is an $\mathcal{N}$-semigroup. We denote this $\mathcal{N}$-semigroup by $\mathcal{N}^+$.

\begin{definition}
Let $S$ be an $\mathcal{N}$-semigroup. A semigroup congruence $\sim_r$ of $S$ is said to be a \emph{congruence of $\mathcal{N}$-semigroup} $S$, if for all $s, t \in S$ and $a \in \mathcal{N}$, \[s \sim_r t  \Longrightarrow sa \sim_r ta.\]
\end{definition}

\begin{definition}
An \emph{$\mathcal{N}$-morphism} of an $\mathcal{N}$-semigroup $S$ is a semigroup homomorphism $\phi$ of $S$ into an $\mathcal{N}$-semigroup $S'$
such that $$(sa)\phi = (s\phi) a$$   for all $a \in \mathcal{N}$ and $s\in S$. The kernel of an $\mathcal{N}$-morphism is called an \emph{$\mathcal{N}$-kernel} of an $\mathcal{N}$-semigroup $S$.
A subsemigroup $T$ of an $\mathcal{N}$-semigroup $S$ is said to be \emph{$\mathcal{N}$-subsemigroup} of $S$ if and only if $0_S \in T$ and $T\mathcal{N} \subseteq T$.
\end{definition}

\begin{definition}
The kernel of a homomorphism of $\mathcal{N}$ is called an \emph{ideal}  of $\mathcal{N}$. The $\mathcal{N}$-kernels of the $\mathcal{N}$-semigroup $\mathcal{N}^+$ are called \emph{right ideals} of $\mathcal{N}$.
\end{definition}

One may refer to \cite{a.hoorn70,a.hoorn67} for a few other notions viz. strong ideal, modular right ideal and $\lambda$-modular right ideal, a special congruence $r''_{\Delta}$ associated to a normal subsemigroup $\Delta$ of a semigroup $S$, and, for various $(\nu, \mu)$, the $\mathcal{N}$-semigroups of type $(\nu, \mu)$. The homomorphism corresponding to $r''_{\Delta}$ is denoted by $\lambda_\Delta$.

\begin{definition}
Let $s$ be an element of an $\mathcal{N}$-semigroup $S$. The annihilator of $s$, denoted by $A(s)$, defined by the set $\{a \in \mathcal{N}: sa = 0_S\}$. Further, for a subset $T$ of $S$, the annihilator of $T$ is
\[A(T) = \displaystyle{\bigcap_{s \in T} A(s)} = \{a \in \mathcal{N}: Ta = 0_S\}.\]
\end{definition}

\begin{theorem}[\cite{t.kvk05a}]
The annihilator $A(S)$ of an $\mathcal{N}$-semigroup $S$ is an ideal of $\mathcal{N}$.
\end{theorem}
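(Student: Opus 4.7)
The plan is to realise $A(S)$ as the kernel of an explicit homomorphism out of $\mathcal{N}$, since this is exactly how the notion of ideal is defined. To this end, I would introduce on $\mathcal{N}$ the relation
\[
a \sim b \quad \Longleftrightarrow \quad sa = sb \text{ for every } s \in S,
\]
and show that $\sim$ is a congruence of the near-semiring $\mathcal{N}$.

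The first step is to verify that $\sim$ is an equivalence relation; this is immediate. Next I would check additive compatibility: if $a \sim b$ and $c \in \mathcal{N}$, then for every $s \in S$ axiom (1) of an $\mathcal{N}$-semigroup gives
\[
s(a+c) = sa + sc = sb + sc = s(b+c),
\]
and symmetrically $s(c+a) = s(c+b)$, so $a+c \sim b+c$ and $c+a \sim c+b$. For multiplicative compatibility, suppose $a \sim b$ and fix $c \in \mathcal{N}$. Using axiom (2), $s(ac) = (sa)c = (sb)c = s(bc)$, which yields $ac \sim bc$. For left multiplication, $s(ca) = (sc)a$ and $s(cb) = (sc)b$; since $sc \in S$ and $a \sim b$, these are equal, so $ca \sim cb$. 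Thus $\sim$ is a congruence of $\mathcal{N}$.

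Finally, let $\pi : \mathcal{N} \longrightarrow \mathcal{N}/{\sim}$ be the canonical homomorphism. Because $\mathcal{N}$ is zero-symmetric, $s \cdot 0 = 0_S$ for every $s \in S$, so the $\sim$-class of $0$ consists precisely of those $a \in \mathcal{N}$ with $sa = 0_S$ for all $s \in S$; that class is exactly $A(S)$. Hence $A(S) = \pi^{-1}([0])$ is the kernel of $\pi$, and therefore an ideal of $\mathcal{N}$.

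The only subtle point is the verification of left multiplicative compatibility $ca \sim cb$, where one must resist trying to pass $c$ past $s$ on the inside and instead use the $\mathcal{N}$-semigroup axiom $s(ca) = (sc)a$ to move the argument to $sc \in S$ before applying the hypothesis $a \sim b$. Once this is observed, the remaining verifications are routine.
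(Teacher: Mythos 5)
The paper states this result without proof, citing it from \cite{t.kvk05a}, so there is no in-paper argument to compare against; your proposal is the natural and standard one. Your verification is correct: the relation $a \sim b \Leftrightarrow sa = sb$ for all $s \in S$ is a near-semiring congruence by exactly the axiom manipulations you describe (with the key point, as you note, being $s(ca) = (sc)a$ with $sc \in S$ for left compatibility), and axiom (3) identifies $[0]_\sim$ with $A(S)$, exhibiting it as the kernel of the quotient homomorphism and hence an ideal.
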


We now recall the notions of various radicals in the following definition.

\begin{definition}[\cite{a.hoorn70}]
Let $\mathcal{N}$ be a zero-symmetric near-semiring.
\begin{enumerate}
\item For $\nu = 0,1$ with $\mu$ = 0,1,2,3 and $\nu$ = 2 with $\mu$ = 0,1 \[J_{(\nu,\mu)}(\mathcal{N}) = \displaystyle{\bigcap_{S \text{\;\;is of type} (\nu,\mu)} A(S)}.\]
\item $R_0(\mathcal{N})$ is the intersection of all maximal modular right ideals of $\mathcal{N}$.
\item $R_1(\mathcal{N})$ is the intersection of all  modular maximal right ideals of  $\mathcal{N}$.
\item $R_2(\mathcal{N})$ is the intersection of all maximal $\lambda$-modular right ideals of  $\mathcal{N}$.
\item $R_3(\mathcal{N})$ is the intersection of all  $\lambda$-modular maximal right ideals of $\mathcal{N}$.
\end{enumerate}
In any case, the empty intersection of subsets of $\mathcal{N}$ is  $\mathcal{N}$.
The relations between these radicals are given in \fref{5.f.rr.}, where $A \rightarrow B ~\mbox{means}~ A \subset B$.
\end{definition}

\begin{center}
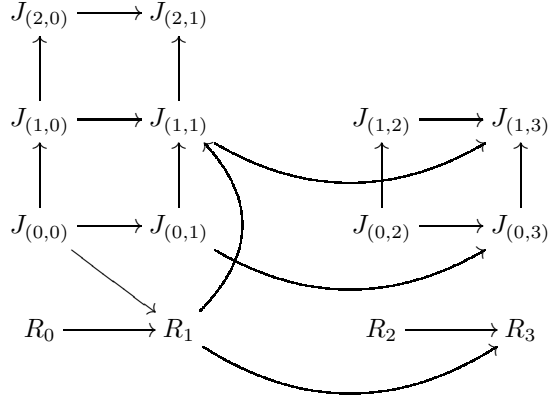
\begin{figure}[t]
\SelectTips{cm}{}
\[\xymatrix{
J_{(2,0)} \ar[r]  &   J_{(2,1)} & *\txt{} &  *\txt{} & *\txt{} & *\txt{}\\
J_{(1,0)} \ar[r] \ar[u] &  J_{(1,1)} \ar[u] \ar@/_2pc/[rrr]& *\txt{} & J_{(1,2)} \ar[r]   & J_{(1,3)}\\
J_{(0,0)} \ar[r] \ar[u] \ar[dr]&   J_{(0,1)} \ar[u]\ar@/_2pc/[rrr] & *\txt{} &  J_{(0,2)} \ar[r] \ar[u] &  J_{(0,3)} \ar[u]\\
R_0 \ar[r] &   R_1 \ar@/_2pc/[rrr] \ar@/_2pc/[uu] & *\txt{} &  R_2 \ar[r] & R_3\\ \\}\]
\caption{Relation between various radicals of a near-semiring }\label{5.f.rr.}
\end{figure}
\end{center}

\begin{remark}[\cite{t.betsch63,b.jacobson64,b.pilz}]
If $\mathcal{N}$ is a near-ring, then $J_{(0,\mu)}(\mathcal{N})$, $\mu = 0,1,2,3$ are the radical $J_0(\mathcal{N})$; $J_{(1,\mu)}(\mathcal{N})$, $\mu$ = 0,1,2,3 are the radical $J_1(\mathcal{N})$; $J_{(2,\mu)}(\mathcal{N}$), $\mu$ = 0,1, are the radical $J_2(\mathcal{N})$; and $R_{\nu}(\mathcal{N})$,  $\nu = 0,1,2,3$ are the radical $D(\mathcal{N})$ of Betsch. Further, if $\mathcal{N}$ is a ring, then all the fourteen radicals are the radical of Jacobson.
\end{remark}

\begin{definition}
A zero-symmetric near-semiring $\mathcal{N}$ is called $(\nu, \mu)$-\emph{primitive} if $\mathcal{N}$ has an $\mathcal{N}$-semigroup $S$ of type $(\nu, \mu)$ with $A(S) = \{0\}$.
\end{definition}

\subsection{An affine near-semiring over a Brandt semigroup} In this subsection, we present the necessary fundamentals of affine near-semirings over Brandt semigroups. For more detail one may refer to \cite{t.jk14,a.jk13}.

Let $(S, +)$ be a semigroup. An element $f \in M(S)$ is said to be an \emph{affine map} if $f = g + h$, for some endomorphism $g$ and a constant map $h$ on $S$. The set of all affine mappings over $S$, denoted by ${\rm Aff}(S)$, need not be a subnear-semiring of $M(S)$. The \emph{affine near-semiring}, denoted by $A^+(S)$, is the subnear-semiring generated by $\text{Aff}(S)$ in $M(S)$. Indeed, the subsemigroup of $(M(S), +)$ generated by $\text{Aff}(S)$ equals $(A^+(S), +)$ (cf. \cite[Corollary 1]{a.kvk05}). If $(S, +)$ is commutative, then $\text{Aff}(S)$ is a subnear-semiring of $M(S)$ so that ${\rm Aff}(S) = A^+(S)$.

\begin{definition}\label{d.bs}
For any integer $n \geq 1$, let $[n] = \{1,2,\ldots,n\}$. The semigroup
$(B_n, +)$, where $B_n = ([n]\times[n])\cup \{\vartheta\}$ and the
operation $+$ is given by
\[ (i,j) + (k,l) =
                \left\{\begin{array}{cl}
                (i,l) & \text {if $j = k$;}  \\
                \vartheta     & \text {if $j \neq k $}
                  \end{array}\right.  \]
and, for all $\alpha \in B_n$, $\alpha + \vartheta = \vartheta + \alpha = \vartheta$, is known as \emph{Brandt semigroup}. Note that $\vartheta$ is the (two
sided) zero element in $B_n$.
\end{definition}

Let $\vartheta$ be the zero element of the semigroup $(S, +)$. For  $f \in M(S)$, the \emph{support of $f$}, denoted by ${\rm supp}(f)$, is defined by the set
\[ {\rm supp}(f) = \{\alpha \in S \;|\; \alpha f \neq \vartheta\}.\]
A function $f \in M(S)$ is said to be of \emph{k-support} if the
cardinality of ${\rm supp}(f)$ is $k$, i.e. $|{\rm supp}(f)| = k$. If $k = |S|$ (or $k = 1$), then $f$ is said to be of \emph{full support} (or
\emph{singleton support}, respectively). For $X \subseteq M(S)$, we
write $X_k$ to denote the set of all mappings of $k$-support in $X$, i.e.
$$ X_k = \{ f \in X \mid f \; \text{is of $k$-support}\;  \}.$$

For ease of reference,  we continue to use the following notations for the elements of $M(B_n)$, as given in \cite{a.jk13}.

\begin{notation}\

\begin{enumerate}
\item For $c \in B_n$, the constant map that sends all the elements of $B_n$ to $c$ is denoted by $\xi_c$. The set of all constant maps over $B_n$ is denoted by $\mathcal{C}_{B_n}$.
\item For $k, l, p, q \in [n]$, the singleton support map that send $(k, l)$ to $(p, q)$ is denoted by $\sis{(k, l)}{(p, q)}$.
\item For $p, q \in [n]$, the $n$-support map which sends $(i, p)$ (where $1 \le i \le n$) to $(i\sigma, q)$ using a permutation $\sigma \in S_n$ is denoted by $(p, q; \sigma)$.  We denote the identity permutation on $[n]$ by $id$.
\end{enumerate}
\end{notation}

Note that $A^+(B_1) = \{(1, 1; id)\} \cup \mathcal{C}_{B_1}$. For $n \ge 2$, the elements of $A^+(B_n)$ are given by the following theorem.

\begin{theorem}[\cite{a.jk13}]\label{t.class.a+bn}
For $n \geq 2$, $A^+(B_n)$ precisely contains $(n! + 1)n^2 + n^4 + 1$ elements with the following breakup.
\begin{enumerate}
\item All the $n^2 + 1$ constant maps.
\item All the $n^4$ singleton support maps.
\item The remaining $(n!)n^2$ elements are the $n$-support maps of the form $(p, q; \sigma)$, where $p, q \in [n]$ and $\sigma \in S_n$.
\end{enumerate}
\end{theorem}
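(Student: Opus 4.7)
The plan is to exploit the fact (recalled above from \cite[Corollary 1]{a.kvk05}) that $(A^+(B_n), +)$ coincides with the additive subsemigroup of $(M(B_n), +)$ generated by ${\rm Aff}(B_n)$. Thus the task reduces to identifying ${\rm Aff}(B_n)$ explicitly and then closing it under pointwise addition. To that end I first classify the endomorphisms of $B_n$. Since $\vartheta$ is absorbing, any endomorphism $g$ satisfies $\vartheta g = \vartheta$, and each $(i,i)g$ must be an idempotent of $B_n$, hence either $\vartheta$ or of the form $(i\sigma, i\sigma)$. If $(i_0, i_0)g = \vartheta$ for even a single $i_0$, then $(i_0, j)g = (i_0, i_0)g + (i_0, j)g = \vartheta$ for all $j$, and symmetrically $(j, i_0)g = \vartheta$; then for any other $i_1$, $(i_1, i_1)g = (i_1, i_0)g + (i_0, i_1)g = \vartheta$, forcing $g = \xi_\vartheta$. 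Otherwise $(i,i)g = (i\sigma, i\sigma)$ for some function $\sigma:[n]\to[n]$, and decomposing $(i,j) = (i,i) + (i,j) = (i,j) + (j,j)$ forces $(i,j)g = (i\sigma, j\sigma)$; injectivity of $\sigma$ follows because $(i,j) + (k,l) = \vartheta$ whenever $j \neq k$. So the endomorphisms of $B_n$ are exactly $\xi_\vartheta$ together with the $n!$ permutation-induced maps $(i,j) \mapsto (i\sigma, j\sigma)$.

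Adding a constant $\xi_c$ to such an endomorphism now yields the affine maps. When either the endomorphism is $\xi_\vartheta$ or $c = \vartheta$, the sum is a constant map. When the endomorphism corresponds to $\sigma \in S_n$ and $c = (p, q)$, pointwise evaluation gives $(k, j)(g + \xi_c) = (k\sigma, j\sigma) + (p, q)$, which is $(k\sigma, q)$ precisely when $j = p\sigma^{-1}$ and $\vartheta$ otherwise; this is the $n$-support map $(p\sigma^{-1}, q; \sigma)$. Hence ${\rm Aff}(B_n) = \mathcal{C}_{B_n} \cup \{(r, q; \sigma) : r, q \in [n],\ \sigma \in S_n\}$. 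Let $\mathcal{T}$ denote the union of the three families listed in the statement. The inclusion $\mathcal{T} \subseteq A^+(B_n)$ is then immediate: constants and $(r,q;\sigma)$ maps are already affine, while every singleton-support map is realised as an affine sum through the identity $\sis{(a,b)}{(p,q)} = \xi_{(p,a)} + (b, q; id)$. For the reverse inclusion I verify closure of $\mathcal{T}$ under pointwise addition by a finite case analysis on the six shape pairs; the computations share a common flavour, using that in $B_n$ a sum of two non-$\vartheta$ elements is non-$\vartheta$ iff their middle indices match. For instance $(r, q; \sigma) + (r', q'; \sigma')$ is $\xi_\vartheta$ whenever $r \neq r'$, and when $r = r'$ the condition $k\sigma' = q$ singles out a unique $k$ and produces a singleton-support sum; the remaining combinations behave identically.

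The main obstacle is the endomorphism classification, especially ruling out genuinely partial endomorphisms using the Brandt identities; after that, every remaining step is a bookkeeping exercise in $B_n$ arithmetic. For the count, the three families making up $\mathcal{T}$ are pairwise disjoint since for $n \geq 2$ their support sizes take the distinct values in $\{0, n^2\}$, $1$, and $n$; so $|\mathcal{T}| = (n^2 + 1) + n^4 + n^2 \cdot n! = (n! + 1)n^2 + n^4 + 1$, matching the stated total.
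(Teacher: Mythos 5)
This theorem is not proved in the present paper at all: it is quoted from the cited reference \cite{a.jk13}, so there is no in-paper proof to compare against, and your reconstruction must be judged on its own terms. It contains one genuine gap, and it sits exactly where you identify the main obstacle. The claim that every endomorphism $g$ of $(B_n,+)$ satisfies $\vartheta g = \vartheta$ is false: for each $i \in [n]$ the constant map $\xi_{(i,i)}$ is an endomorphism, since $(x+y)\xi_{(i,i)} = (i,i) = (i,i)+(i,i) = x\xi_{(i,i)} + y\xi_{(i,i)}$. (Indeed, if $\vartheta g \neq \vartheta$, then $\vartheta g$ is a nonzero idempotent $(i,i)$, and $\vartheta = \vartheta + x$ forces $(i,i) = (i,i) + xg$, hence $xg = (i,i)$ for every $x$; so the complete list of endomorphisms is $\xi_\vartheta$, the $n$ maps $\xi_{(i,i)}$, and the $n!$ permutation-induced maps.)

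This omission is not cosmetic. Under your classification every affine map $g + \xi_c$ sends $\vartheta$ to $\vartheta g + c = \vartheta + c = \vartheta$, and therefore so does every element of the additive subsemigroup these maps generate; in particular no nonzero constant $\xi_{(p,q)}$ could belong to $A^+(B_n)$, contradicting item (1) of the very theorem you are proving. Your sentence ``when the endomorphism is $\xi_\vartheta$ or $c = \vartheta$, the sum is a constant map'' in fact only produces $\xi_\vartheta$, not all of $\mathcal{C}_{B_n}$, so the asserted equality ${\rm Aff}(B_n) = \mathcal{C}_{B_n} \cup \{(r,q;\sigma)\}$ does not follow from your argument. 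The repair is precisely the missing endomorphisms: $\xi_{(p,p)} + \xi_{(p,q)} = \xi_{(p,q)}$ exhibits every constant map as affine. Once that is in place, your identity $\sis{(a,b)}{(p,q)} = \xi_{(p,a)} + (b,q;id)$, the closure of the three families under pointwise addition, and the final count all go through (modulo the harmless slip that a full-support constant has support of size $n^2+1$ rather than $n^2$, which does not affect the disjointness argument for $n \geq 2$).
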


We are ready to investigate the radicals and ideals of $A^+(B_n)$ -- the affine near-semiring over a Brandt semigroup. Since the radicals are defined in the context of zero-symmetric near-semirings, we extend the semigroup reduct $(A^+(B_n), +)$ to monoid by adjoining $0$ and make the resultant near-semiring zero-symmetric. In what follows, the zero-symmetric affine near-semiring $A^+(B_n) \cup \{0\}$ is denoted by $\mathcal{N}$, i.e.
\begin{enumerate}
\item $(\mathcal{N}, +)$ is a monoid with identity element $0$,
\item $(\mathcal{N}, \circ)$ is a semigroup,
\item $0f = f0 = 0$, for all $f \in \mathcal{N}$, and
\item $f(g + h) = fg + fh$, for all $f, g, h \in \mathcal{N}$.
\end{enumerate}
In this work, a nontrivial congruence of an algebraic structure is meant to be a congruence which is neither equality nor universal relation.

\section{Right ideals}

In this section, we obtain all the right ideals of the affine near-semiring $\mathcal{N}$ by ascertaining the concerning congruences of $\mathcal{N}$-semigroups.  We begin with the following lemma.

\begin{lemma}\label{l.cong-sg}
Let $\sim$ be a nontrivial congruence over the semigroup $(\mathcal{N}, +)$ and $f \in  A^+(B_n)_{n^2 + 1}$. If $f \sim \xi_{\vt}$, then $\sim \; = (A^+(B_n) \times A^+(B_n)) \cup \{(0, 0)\}$.
\end{lemma}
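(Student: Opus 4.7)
The plan is to collapse all of $A^+(B_n)$ into a single $\sim$-class and then isolate $0$ using the nontriviality hypothesis. First I would observe that, by \tref{t.class.a+bn}, $A^+(B_n)_{n^2+1}$ consists exactly of the non-zero constant maps $\xi_{(p,q)}$ with $(p,q)\in [n]\times[n]$, since these are the only elements of $A^+(B_n)$ whose support is all of $B_n$. So the hypothesis provides $\xi_{(p,q)}\sim \xi_\vt$ for some such $(p,q)$.

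The first step is to propagate this relation to every non-zero constant. The identities $\xi_{(p,q)}+\xi_{(q,r)}=\xi_{(p,r)}$ (which follows from $(p,q)+(q,r)=(p,r)$) and $\xi_\vt+\xi_{(q,r)}=\xi_\vt$, together with additive congruence, yield $\xi_{(p,r)}\sim \xi_\vt$ for every $r$; a symmetric argument adding $\xi_{(k,p)}$ on the left gives $\xi_{(k,q)}\sim \xi_\vt$, and iterating once more produces $\xi_c\sim \xi_\vt$ for every $c\in [n]\times[n]$.

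The second step promotes this to an arbitrary $g\in A^+(B_n)$. By \tref{t.class.a+bn}, every non-$\xi_\vt$ element $g$ has its non-$\vt$ image values lying in a single column $[n]\times\{q\}$: for $\sis{(k,l)}{(p,q)}$ this is immediate, and for $(p,q;\sigma)$ the image is $\{(i\sigma,q):i\in[n]\}\cup\{\vt\}$. The pointwise check then shows $g+\xi_{(q,q)}=g$, because $(a,q)+(q,q)=(a,q)$ and $\vt+(q,q)=\vt$. Since $\xi_{(q,q)}\sim \xi_\vt$ and $g+\xi_\vt=\xi_\vt$, additive congruence forces $g=g+\xi_{(q,q)}\sim g+\xi_\vt=\xi_\vt$. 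Therefore every element of $A^+(B_n)$ lies in the $\sim$-class of $\xi_\vt$.

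The final step isolates $0$. If some $g\in A^+(B_n)$ were $\sim$-related to $0$, transitivity through the previous step would force $0\sim \xi_\vt$; then for every $h\in \mathcal{N}$ the relation $h=0+h\sim \xi_\vt+h$ (which collapses to $\xi_\vt$ whether $h\in A^+(B_n)$ or $h=0$) would make $\sim$ the universal relation on $\mathcal{N}$, contradicting nontriviality. Hence the class of $0$ is $\{0\}$ and the class of $\xi_\vt$ is $A^+(B_n)$, giving exactly $(A^+(B_n)\times A^+(B_n))\cup\{(0,0)\}$. The only non-routine point is the uniformity in Step~2: a single constant $\xi_{(q,q)}$ serves as a right additive identity for every element of $A^+(B_n)$ whose non-$\vt$ image lies in column $q$, and this uniformity across all three families of \tref{t.class.a+bn} is what drives the argument.
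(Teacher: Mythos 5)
Your proof is correct and follows the same strategy as the paper's: first propagate the relation $\xi_c \sim \xi_{\vt}$ to all nonzero constant maps via the Brandt addition, then express every remaining element of $A^+(B_n)$ as a sum containing a constant summand that can be replaced by $\xi_{\vt}$, and finally use nontriviality to keep $0$ in its own class. Your only departure is a pleasant unification of the second step --- the single identity $g = g + \xi_{(q,q)}$, valid because every non-$\xi_{\vt}$ element has its nonzero image values in one column $[n]\times\{q\}$, replaces the paper's separate decompositions $(k,l;\sigma) = (k,p;\sigma) + \xi_{(p,l)}$ and $\sis{(k,l)}{(p,q)} = \xi_{(p,q)} + (l,q;\sigma)$ for the $n$-support and singleton-support cases.
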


\begin{proof}
First note that $(A^+(B_n) \times A^+(B_n)) \cup \{(0, 0)\}$ is a congruence relation of the semigroup $(\mathcal{N}, +)$. Let $f = \xi_{(p_0, q_0)}$ and $\xi_{(p, q)}$ be an arbitrary full support map. Since \[\xi_{(p, q)} = \xi_{(p, p_0)} + \xi_{(p_0, q_0)} + \xi_{(q_0, q)} \sim \xi_{(p, p_0)} + \xi_{\vt} + \xi_{(q_0, q)} = \xi_{\vt},\] we have $\xi_{(p, q)} \sim \xi_{\vt}$ for all $p, q \in [n]$. Further, given an arbitrary $n$-support map $(k, l; \sigma)$, since $\xi_{(p, l)} \sim \xi_{\vt}$, we have \[(k, l; \sigma) = (k, p; \sigma) + \xi_{(p, l)} \sim (k, p; \sigma) + \xi_{\vt} = \xi_{\vt}.\] Thus, all $n$-support maps are related to $\xi_{\vt}$ under $\sim$.  Similarly, given an arbitrary $\sis{(k, l)}{(p, q)} \in A^+(B_n)_1$, since $\xi_{(p, q)} \sim \xi_{\vt}$,   for $\sigma \in S_n$ such that $k\sigma = q$, we have \[\sis{(k, l)}{(p, q)} = \xi_{(p, q)} + (l, q; \sigma) \sim \xi_{\vt} + (l, q; \sigma) = \xi_\vt.\] Hence, all elements of $A^+(B_n)$ are related to each other under $\sim$.
\end{proof}

Now, using \lref{l.cong-sg}, we determine the right ideals of $\mathcal{N}$ in the following theorem.

\begin{theorem}\label{5.t.rideal}
$\mathcal{N}$ and $\{0\}$ are only the right ideals of $\mathcal{N}$.
\end{theorem}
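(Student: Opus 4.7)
The plan is to suppose a right ideal $R$ of $\mathcal{N}$ contains some nonzero element and then to force $R = \mathcal{N}$. Identify $R$ with the $0$-class of an $\mathcal{N}$-semigroup congruence $\sim_r$ on $\mathcal{N}^+$. Pick $a \in R \setminus \{0\}$; since $0$ is the adjoined additive identity, $a$ automatically lies in $A^+(B_n)$.

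The central observation is a one-line multiplicative check: for every $a \in A^+(B_n)$ and every $c \in B_n$ one has $a \cdot \xi_c = \xi_c$, because $x(a\xi_c) = (xa)\xi_c = c$ for all $x \in B_n$. Feeding this into the right-multiplicative compatibility of $\sim_r$ (applied to the relation $a \sim_r 0$) yields $\xi_c = a\xi_c \sim_r 0 \cdot \xi_c = 0$ for every $c \in B_n$. In particular, both $\xi_{\vt}$ and each full-support constant $\xi_{(p,q)}$ land in $R$, so $\xi_{(p,q)} \sim_r \xi_{\vt}$.

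I would then close the argument via \lref{l.cong-sg}. Assume toward contradiction that $R \neq \mathcal{N}$; then $\sim_r$ is not the universal relation, and because $\xi_{\vt} \sim_r 0$ with $\xi_{\vt} \neq 0$ it is not equality either, so $\sim_r$ is a nontrivial semigroup congruence on $(\mathcal{N}, +)$. Since $\xi_{(p,q)} \in A^+(B_n)_{n^2+1}$ and $\xi_{(p,q)} \sim_r \xi_{\vt}$, \lref{l.cong-sg} pins $\sim_r$ down to $(A^+(B_n) \times A^+(B_n)) \cup \{(0,0)\}$, a relation in which $\xi_{\vt}$ and $0$ occupy distinct classes --- contradicting $\xi_{\vt} \sim_r 0$. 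Hence $R = \mathcal{N}$, and the only right ideals of $\mathcal{N}$ are $\{0\}$ and $\mathcal{N}$.

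I do not expect any real obstacle: the only substantive step is the identity $a \cdot \xi_c = \xi_c$, which is immediate from the definition of a constant map, and everything else is just routing a ``nonzero element in $R$'' through right multiplication into a form where the additive-congruence lemma takes over.
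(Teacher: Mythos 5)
Your proposal is correct and follows essentially the same route as the paper: right-multiply the relation $a \sim_r 0$ by constant maps to show every $\xi_c$ lies in the $0$-class, then invoke \lref{l.cong-sg} to force $\sim_r$ to be universal. You merely make explicit the final contradiction step (that the congruence $(A^+(B_n) \times A^+(B_n)) \cup \{(0,0)\}$ separates $\xi_{\vt}$ from $0$) which the paper leaves implicit.
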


\begin{proof}
Let $I \ne \{0\}$ be a right ideal of $\mathcal{N}$ so that $I = \ker \varphi$, where $\varphi: \mathcal{N}^+ \longrightarrow S$ is an $\mathcal{N}$-morphism. Note that $I = [0]_{ \sim_r}$, where $\sim_r$ is the congruence over the $\mathcal{N}$-semigroup $\mathcal{N}^+$ defined by $a \sim_r b$ if and only if $a\varphi = b\varphi$, i.e.  the relation $\sim_r$ on $\mathcal{N}$ is compatible with respect to $+$ and if $a \sim_r b$ then $ac \sim_r bc$ for all $c \in \mathcal{N}$.

Let $f$ be a nonzero element of $\mathcal{N}$ such that $f \sim_r 0$. First note that \[\xi_\vt = f\xi_{\vt} \sim_r 0\xi_{\vt} = 0.\] Further, for any full support map $\xi_{(p, q)}$, we have \[\xi_{(p, q)} = f\xi_{(p, q)} \sim_r 0\xi_{(p, q)} = 0\]  so that, by transitivity, $\xi_{(p, q)}  \sim_r \xi_{\vt}$. Hence, by Lemma \ref{l.cong-sg},  $\sim_r = \mathcal{N} \times \mathcal{N}$ so that $I = \mathcal{N}$.
\end{proof}

\begin{remark}\label{5.r.id0.max.}
The ideal $\{0\}$ is the maximal right ideal of $\mathcal{N}$.
\end{remark}

\section{Radicals}

In order to obtain the radicals of the affine near-semiring $\mathcal{N}$, in this section, we first identify an $\mathcal{N}$-semigroup which satisfies the criteria of all types of $\mathcal{N}$-semigroups by van Hoorn. Using the $\mathcal{N}$-semigroup, we ascertain the radicals of $\mathcal{N}$.  Further, we observe that the near-semiring $\mathcal{N}$ is $(\nu, \mu)$-primitive (cf. \tref{5.t.types}).

Consider the subsemigroup $\mathcal{C} = \mathcal{C}_{B_n} \cup \{0\}$ of $(\mathcal{N}, +)$ and observe that  $\mathcal{C}$ is an $\mathcal{N}$-semigroup with respect to the multiplication in $\mathcal{N}$. The following properties of the $\mathcal{N}$-semigroup $\mathcal{C}$ are useful.

\begin{lemma}\label{l.prop.C} $\;$
\begin{enumerate}
\item Every nonzero element of $\mathcal{C}$ is a generator. Moreover, the $\mathcal{N}$-semigroup $\mathcal{C}$ is strongly monogenic and $A(g) = \{0\}$ for all $g \in \mathcal{C} \setminus \{0\}$.
\item The subsemigroup $\{0\}$ is the maximal $\mathcal{N}$-subsemigroup of $\mathcal{C}$.
\item The $\mathcal{N}$-semigroup $\mathcal{C}$ is irreducible.
\end{enumerate}
\end{lemma}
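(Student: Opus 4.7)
The plan is to reduce all three items to one computational observation. For any $\xi_c \in \mathcal{C}_{B_n}$ and any $c' \in B_n$, a one-line calculation using $x(\xi_c\xi_{c'}) = (x\xi_c)\xi_{c'} = c\xi_{c'} = c'$ gives $\xi_c\xi_{c'} = \xi_{c'}$. Since $\mathcal{N}$ is zero-symmetric, we also have $\xi_c\cdot 0 = 0$; combined with the previous identity, this yields $\xi_c\mathcal{N} \cup \{0\} = \mathcal{C}$ for every nonzero $\xi_c \in \mathcal{C}$. All three items are straightforward consequences of this single fact.

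For (1), the displayed equality says precisely that every nonzero element of $\mathcal{C}$ is a generator, so $\mathcal{C}$ is monogenic and in fact strongly monogenic. For the annihilator: for any $\xi_c \in \mathcal{C}_{B_n}$ and $f \in \mathcal{N}$, the product $\xi_c f = \xi_{cf}$ is again a constant map whenever $f \ne 0$, hence cannot coincide with the adjoined identity $0 \in \mathcal{C}$; therefore $A(\xi_c) = \{0\}$.

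For (2), I would take an arbitrary $\mathcal{N}$-subsemigroup $T$ strictly larger than $\{0\}$, pick a nonzero element $\xi_c \in T$, and combine the axioms $0 \in T$ and $T\mathcal{N} \subseteq T$ to deduce $T \supseteq \xi_c\mathcal{N} \cup \{0\} = \mathcal{C}$. Consequently $\{0\}$ and $\mathcal{C}$ are the only $\mathcal{N}$-subsemigroups of $\mathcal{C}$, so $\{0\}$ is the maximal one.

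For (3), I would argue along the lines of \tref{5.t.rideal}: if $\sim$ is an $\mathcal{N}$-semigroup congruence with a nontrivial $\mathcal{N}$-kernel, then $\xi_c \sim 0$ for some $c \in B_n$, and $\mathcal{N}$-compatibility together with $\xi_c\xi_{c'} = \xi_{c'}$ forces $\xi_{c'} = \xi_c\xi_{c'} \sim 0\cdot\xi_{c'} = 0$ for every $c' \in B_n$. Hence the $\mathcal{N}$-kernel is all of $\mathcal{C}$, and $\{0\},\mathcal{C}$ are the only $\mathcal{N}$-kernels, giving irreducibility. I do not anticipate any real obstacle: the entire lemma is driven by the single identity $\xi_c\xi_{c'} = \xi_{c'}$, and neither the $n$-support nor the singleton-support maps of $\mathcal{N}$ play any essential role in the arguments.
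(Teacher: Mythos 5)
Your proposal is correct, and for parts (1) and (2) it is essentially the paper's own argument: the identity $\xi_c\xi_{c'}=\xi_{c'}$ (the paper's ``$gf=f$ for $f\in\mathcal{C}$''), together with the observation that a constant map composed with any nonzero element of $\mathcal{N}$ is again a constant map, yields $\xi_c\mathcal{N}=\mathcal{C}$, $A(\xi_c)=\{0\}$, strong monogenicity, and the absence of intermediate $\mathcal{N}$-subsemigroups exactly as you describe (the paper phrases (2) as a contradiction with some $g\in\mathcal{C}\setminus T$; you argue directly that $T\supseteq\xi_c\mathcal{N}=\mathcal{C}$ --- same content, and your spelling out of $A(\xi_c)=\{0\}$ is actually more explicit than the paper's). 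The genuine divergence is in (3): the paper does not verify irreducibility by hand, but combines $A(g)=\{0\}$ with \rref{5.r.id0.max.} (that $\{0\}$ is a maximal right ideal, which rests on \tref{5.t.rideal}) and then invokes van Hoorn's Theorem 8 for monogenic $\mathcal{N}$-semigroups. You instead check directly that every $\mathcal{N}$-kernel of $\mathcal{C}$ is $\{0\}$ or $\mathcal{C}$; this is self-contained and removes the dependence on Section 3 entirely. One caution: your verification operates only on zero-classes, so it establishes that the $\mathcal{N}$-kernels of $\mathcal{C}$ are trivial --- which is what van Hoorn's kernel-based notion of irreducibility (and his Theorem 8) delivers --- but it does not exclude nontrivial $\mathcal{N}$-congruences of $\mathcal{C}$ whose zero-class is $\{0\}$; indeed $(\mathcal{C}_{B_n}\times\mathcal{C}_{B_n})\cup\{(0,0)\}$ is such a congruence. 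So you should state explicitly that you are verifying the kernel formulation of irreducibility; with that formulation your argument is complete and matches what the paper uses.
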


\begin{proof}$\;$
\begin{enumerate}
\item Let $g \in \mathcal{C}_{B_n}$. Note that $g\mathcal{N} \subseteq \mathcal{C}$ because the product of a constant map with any map is a constant map. Conversely, for $f \in \mathcal{C}$, since $gf = f$, we have $g\mathcal{N} = \mathcal{C}$ for all $g \in \mathcal{C} \setminus \{0\}$. Further, since $0\mathcal{N} = \{0\}$ and $\mathcal{C}\mathcal{N}  = \mathcal{C}\ne \{0\}$. Hence, $\mathcal{C}$ is strongly monogenic.
\item We show that the semigroups $\mathcal{C}$ and $\{0\}$ are the only $\mathcal{N}$-subsemigroups of $\mathcal{C}$. Let $T$ be an $\mathcal{N}$-subsemigroup of $\mathcal{C}$ such that $\{0\} \ne T \subsetneq \mathcal{C}$. Then there exist $f (\ne 0) \in T$ and $g \in \mathcal{C} \setminus T$. Since $fg = g \not\in T$, we  have $T\mathcal{N} \nsubseteq T$; a contradiction to $T$ is an $\mathcal{N}$-subsemigroup. Hence, the result.
\item By \lref{l.prop.C}(1), the $\mathcal{N}$-semigroup $\mathcal{C}$ is monogenic with any nonzero element $g$ as generator such that $A(g) = \{0\}$; thus, $A(g)$ is maximal right ideal in $\mathcal{N}$ (cf. \rref{5.r.id0.max.}). Hence, by \cite[Theorem 8]{a.hoorn70}, $\mathcal{C}$ is irreducible.
\end{enumerate}
\end{proof}

\begin{remark}\label{r.typ 2-1}
Since a strongly monogenic $\mathcal{N}$-semigroup is monogenic we have, for $\mu = 0, 1, 2, 3$, an $\mathcal{N}$-semigroup of type $(1, \mu)$ is of type $(0, \mu)$.
\end{remark}

\begin{theorem}\label{5.t.types}
For $\nu = 0, 1$ with $\mu = 0, 1, 2, 3$ and $\nu = 2$ with $\mu = 0, 1$, we have the following.
\begin{enumerate}
 \item The $\mathcal{N}$-semigroup $\mathcal{C}$ is of type $(\nu, \mu)$ with $A(\mathcal{C}) = 0$.
 \item The near-semiring $\mathcal{N}$ is $(\nu, \mu)$-primitive for all $\nu$ and $\mu$.
 \item $J_{(\nu, \mu)}(\mathcal{N}) = \{0\}$ for all $\nu$ and $\mu$.
\end{enumerate}
\end{theorem}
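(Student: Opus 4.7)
The plan is to leverage the properties of $\mathcal{C}$ established in \lref{l.prop.C} and then read off (2) and (3) almost immediately from (1).

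First, for part (1) of the theorem, I would go through van Hoorn's definitions of type $(\nu, \mu)$ case by case and verify them using the three ingredients already in hand: $\mathcal{C}$ is strongly monogenic, $\{0\}$ is its unique proper $\mathcal{N}$-subsemigroup, and $\mathcal{C}$ is irreducible. Strong monogenicity, with every nonzero $g \in \mathcal{C}$ serving as a generator and $A(g) = \{0\}$, directly gives that $\mathcal{C}$ is of type $(1, \mu)$ for every admissible $\mu$, since the discriminating $\mu$-conditions (on $\mathcal{N}$-subsemigroups, $\mathcal{N}$-ideals and $\mathcal{N}$-kernels of $\mathcal{C}$) all degenerate once we know the only proper $\mathcal{N}$-subsemigroup is $\{0\}$. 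By \rref{r.typ 2-1}, this automatically gives type $(0, \mu)$ for $\mu = 0, 1, 2, 3$. Finally, irreducibility together with strong monogenicity provides the type $(2, 0)$ and $(2, 1)$ conclusions (again the $\mu$-side is trivial because only the trivial $\mathcal{N}$-subsemigroup exists).

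The annihilator computation is short: for any nonzero $g \in \mathcal{C}$, $\lref{l.prop.C}$(1) gives $A(g) = \{0\}$, and by definition $A(\mathcal{C}) = \bigcap_{s \in \mathcal{C}} A(s) \subseteq A(g) = \{0\}$, so $A(\mathcal{C}) = \{0\}$.

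Part (2) then follows at once from (1) and the definition of $(\nu, \mu)$-primitivity: we have exhibited an $\mathcal{N}$-semigroup $\mathcal{C}$ of type $(\nu, \mu)$ with $A(\mathcal{C}) = \{0\}$ for every admissible $(\nu, \mu)$, which is precisely what primitivity requires. For part (3), using the defining formula
\[
J_{(\nu, \mu)}(\mathcal{N}) = \bigcap_{S \text{ of type } (\nu,\mu)} A(S),
\]
the intersection is taken over a family that includes $\mathcal{C}$, so $J_{(\nu, \mu)}(\mathcal{N}) \subseteq A(\mathcal{C}) = \{0\}$; the reverse inclusion is trivial in a zero-symmetric near-semiring.

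The main obstacle I expect is bookkeeping: one has to match each abstract $\mu$-condition in van Hoorn's typology against the concrete structure of $\mathcal{C}$, and the key fact that makes the verification uniform is that the lattice of $\mathcal{N}$-subsemigroups of $\mathcal{C}$ is just $\{\{0\}, \mathcal{C}\}$ (from \lref{l.prop.C}(2)), which collapses the $\mu$-hierarchy. Once this observation is in place, the rest of the argument is essentially a citation of \lref{l.prop.C} together with the definition of $J_{(\nu, \mu)}$.
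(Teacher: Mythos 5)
Your overall strategy is the same as the paper's: verify part (1) case by case from \lref{l.prophy.C}\footnote{} -- sorry, from \lref{l.prop.C} -- and read off (2) and (3) from the definitions of primitivity and of $J_{(\nu,\mu)}$; the annihilator computation and the reduction of type $(0,\mu)$ to type $(1,\mu)$ via \rref{r.typ 2-1} also match. The one genuine problem is your stated reason for why the $\mu$-conditions hold when $\nu=0,1$. For these $\nu$, van Hoorn's conditions $\mu=1,2,3$ are properties of $A(g)$ \emph{as a right ideal of $\mathcal{N}$} (maximal right ideal, strong maximal right ideal, maximal strong right ideal), not conditions on the $\mathcal{N}$-subsemigroups, $\mathcal{N}$-ideals or $\mathcal{N}$-kernels of $\mathcal{C}$; so the fact that the only proper $\mathcal{N}$-subsemigroup of $\mathcal{C}$ is $\{0\}$ (\lref{l.prop.C}(2)) does not make these conditions ``degenerate.'' What actually closes these cases is that $A(g)=\{0\}$ together with \tref{5.t.rideal}/\rref{5.r.id0.max.}: since $\{0\}$ and $\mathcal{N}$ are the only right ideals, $\{0\}$ is a maximal right ideal, and it is trivially strong, which is exactly the citation the paper makes for types $(1,1)$, $(1,2)$, $(1,3)$. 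Your appeal to the trivial subsemigroup lattice of $\mathcal{C}$ is the right tool only for the $\nu=2$ cases (where the paper uses \lref{l.prop.C}(2) to get type $(2,1)$ and then van Hoorn's Theorem~9 for type $(2,0)$). The gap is easily repaired with material already established in the paper, but as written the key fact you single out would not justify the $(1,\mu)$ cases for $\mu\geq 1$.
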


\begin{proof}  In view of \rref{r.typ 2-1}, we prove (1) in the following cases.
\begin{description}
\item[Type $(1, \mu)$] Note that, by  \lref{l.prop.C}(1), the  $\mathcal{N}$-semigroup $\mathcal{C}$ is strongly monogenic.
    \begin{enumerate}
    \item[(i)] By \lref{l.prop.C}(3), we have $\mathcal{C}$ is irreducible. Hence, $\mathcal{C}$ is of type $(1,0)$.

    \item[(ii)] By \lref{l.prop.C}(1) and \rref{5.r.id0.max.}, for any generator $g$, $A(g)$ is a maximal right ideal. Hence, $\mathcal{C}$ is of type $(1,1)$.

    \item[(iii)] Note that the ideal $\{0\}$ is strong right ideal so that for any generator $g$, $A(g)$ is a strong maximal right ideal (see ii above). Further, note that  $A(g)$ is a  maximal strong right ideal (cf. \rref{5.r.id0.max.}). Hence, $\mathcal{C}$ is of type $(1, 2)$ and $(1, 3)$.
\end{enumerate}

\item[Type $(2, \mu)$] Since $\mathcal{C}$ is monogenic and, for any generator $g$ of $\mathcal{C}$, $A(g)$ is a maximal $\mathcal{N}$-subsemigroup of $\mathcal{C}$ (cf. \lref{l.prop.C}(1) and \lref{l.prop.C}(2)).  Thus, $\mathcal{C}$ is of type $(2, 1)$. By \cite[Theorem 9]{a.hoorn70}, every $\mathcal{N}$-semigroup of type $(2,1)$ will be of type $(2,0)$. Hence, $\mathcal{C}$ is of type $(2,0)$.
\end{description}
Proofs for (2) and (3) follow from (1).
\end{proof}

\begin{theorem}\label{5.t.rad-2}
 For $\nu = 0,1$, we have $R_{\nu}(\mathcal{N}) = \{0\}$.
\end{theorem}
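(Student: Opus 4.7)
The plan is to bracket each of $R_0(\mathcal{N})$ and $R_1(\mathcal{N})$ between $\{0\}$ on both sides, leaning on the structural facts already assembled. By \tref{5.t.rideal} and \rref{5.r.id0.max.} the only proper right ideal of $\mathcal{N}$ is $\{0\}$, which is consequently the unique maximal right ideal; in particular, the only possible modular maximal right ideal, and the only possible maximal modular right ideal, is $\{0\}$ itself.

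It remains to verify that $\{0\}$ really qualifies, i.e.\ that it is in fact modular. For this I would invoke \tref{5.t.types}(1): the $\mathcal{N}$-semigroup $\mathcal{C}$ is of type $(1,1)$, and by definition this entails that for every generator $g$ of $\mathcal{C}$ the annihilator $A(g)$ is a modular maximal right ideal of $\mathcal{N}$. By \lref{l.prop.C}(1) one has $A(g) = \{0\}$, so $\{0\}$ is modular. Since $\{0\}$ is the unique maximal right ideal, it is both a modular maximal right ideal and a maximal modular right ideal, and each of the intersections defining $R_0(\mathcal{N})$ and $R_1(\mathcal{N})$ is then over the singleton family $\{\,\{0\}\,\}$, yielding $R_0(\mathcal{N}) = R_1(\mathcal{N}) = \{0\}$.

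An equivalent, slightly more diagrammatic route would go through \fref{5.f.rr.}: read off the containments $R_0 \subseteq R_1 \subseteq J_{(1,1)}$, combine with $J_{(1,1)}(\mathcal{N}) = \{0\}$ from \tref{5.t.types}(3) to get the upper bound, and use the modular maximal right ideal just produced (or the $(1,1)$-primitivity of \tref{5.t.types}(2)) to guarantee that the intersection is non-empty and contains $0$, supplying the matching lower bound. Either way the main and essentially only obstacle is the routine definitional step of extracting a modular maximal right ideal from the type-$(1,1)$ conclusion of \tref{5.t.types}; beyond that, the statement is immediate.
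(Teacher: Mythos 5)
Your proposal is correct and follows essentially the same route as the paper: exhibit $\{0\}$ as a modular maximal right ideal --- using that $\mathcal{C}$ is monogenic with a generator $g$ satisfying $A(g)=\{0\}$, together with \rref{5.r.id0.max.} --- and then conclude via the containments of \fref{5.f.rr.} and the fact that $\{0\}$ is the unique proper right ideal. The only cosmetic difference is that the paper derives the modularity of $A(g)=\{0\}$ from van Hoorn's Theorem 7 on monogenic $\mathcal{N}$-semigroups rather than reading it off the type-$(1,1)$ property (whose definition asks only that $A(g)$ be a maximal right ideal); the underlying fact you need is the same.
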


\begin{proof}
In view of \fref{5.f.rr.}, we prove that result by showing that the right ideal $\{0\}$ is a modular maximal right ideal. By \lref{l.prop.C}(1), the $\mathcal{N}$-semigroup  $\mathcal{C}$ is monogenic and has a generator $g$ such that $A(g) = \{0\}$. Hence, the right ideal $\{0\}$ is modular (cf. \cite[Theorem 7]{a.hoorn70}). Further, since $\{0\}$ is a maximal right ideal (cf. \rref{5.r.id0.max.}), we have $\{0\}$ is a modular maximal right ideal.
\end{proof}

\begin{theorem}\label{5.t.rad-3}
For $\nu = 2, 3$, we have $R_{\nu}(\mathcal{N}) = \mathcal{N}$.
\end{theorem}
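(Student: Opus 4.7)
The plan is to show that the right ideal $\{0\}$ is not $\lambda$-modular. Combined with \tref{5.t.rideal}, which lists $\{0\}$ and $\mathcal{N}$ as the only right ideals of $\mathcal{N}$, this will mean that no proper right ideal of $\mathcal{N}$ is $\lambda$-modular; the families whose intersections define $R_2(\mathcal{N})$ and $R_3(\mathcal{N})$ are therefore both empty, and the standing convention on empty intersections yields $R_\nu(\mathcal{N}) = \mathcal{N}$.

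Since $\{0\}$ is the trivial normal subsemigroup of $(\mathcal{N},+)$, the congruence $r''_{\{0\}}$ reduces to the equality relation, so $\lambda$-modularity of $\{0\}$ amounts to the existence of a genuine multiplicative left identity $e \in \mathcal{N}$, i.e.\ an element with $ef = f$ for every $f \in \mathcal{N}$. I would rule out such an $e$ by testing against the $n$-support maps. For each $p \in [n]$, evaluating the identity $e \circ (p,p;id) = (p,p;id)$ at the argument $(i,p)$ yields $((i,p)e)(p,p;id) = (i,p)$; since $(p,p;id)$ is injective on its support $\{(j,p) : j \in [n]\}$ via the identity permutation, this forces $(i,p)e = (i,p)$. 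Letting $p$ range over $[n]$ pins down $\alpha e = \alpha$ for every $\alpha \in [n] \times [n]$, and an analogous evaluation at $\vartheta$ (which lies outside the support of every $n$-support map) forces $\vartheta e = \vartheta$. Thus $e$ would have to coincide with the identity map on $B_n$, whose support has cardinality $n^2$.

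By \tref{t.class.a+bn}, however, the elements of $A^+(B_n)$ have support cardinalities only in $\{0, 1, n, n^2 + 1\}$, and for $n \geq 2$ the value $n^2$ is not among these. No left identity $e$ can therefore exist in $\mathcal{N}$, producing the required contradiction and completing the argument. The main subtlety I anticipate is verifying that $r''_{\{0\}}$ is genuinely the equality relation on $\mathcal{N}$; this should follow directly from the definition of $r''_\Delta$ in \cite{a.hoorn67} specialised to the trivial normal subsemigroup, after which the rest is just a support-size calculation against \tref{t.class.a+bn}.
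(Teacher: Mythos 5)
Your proposal is correct and takes essentially the same route as the paper: both reduce the claim to observing that $r''_{\{0\}}$ is the equality relation, so that $\lambda$-modularity of the only proper right ideal $\{0\}$ would force a multiplicative left identity in $\mathcal{N}$, which does not exist, leaving the defining families for $R_2$ and $R_3$ empty. The only difference is that you actually justify the non-existence of a left identity via the support-cardinality argument (no element of $A^+(B_n)$ has support of size $n^2$ when $n \ge 2$), a step the paper merely asserts.
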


\begin{proof}
In view of \fref{5.f.rr.} and \tref{5.t.rideal}, we prove that the homomorphism $\lambda_{\{0\}}$  is not modular. Note that the congruence relation $r_{\{0\}}''$ is the equality relation on $(\mathcal{N}, +)$, where $r_{\{0\}}''$ is the transitive closure of the two sided stable reflexive and symmetric relation relation $r_{\{0\}}$ associated with a normal subsemigroup $\{0\}$ of the semigroup $(\mathcal{N}, +)$. Consequently, the semigroup homomorphism $\lambda_{\{0\}}$ is an identity map on $(\mathcal{N}, +)$. If the morphism $\lambda_{\{0\}}$ is modular, then there is an element $u \in \mathcal{N}$ such that $x = ux$ for all $x \in \mathcal{N}$, but there is no left identity element in $\mathcal{N}$. Consequently, $\lambda_{\{0\}}$ is not modular. Thus, there is no maximal $\lambda$-modular right ideal. Hence, for $\nu = 2, 3$, we have $R_{\nu}(\mathcal{N}) = \mathcal{N}$.
\end{proof}

\section{Ideals}

In this section, we prove that there is only one nontrivial congruence relation on $\mathcal{N}$ (cf. \tref{5.t.congs}). Consequently, all the ideals of $\mathcal{N}$ are determined.

\begin{theorem}\label{5.t.congs}
The near-semiring $\mathcal{N}$ has precisely the following congruences.
\begin{enumerate}
\item Equality relation
\item $\mathcal{N} \times \mathcal{N}$
\item $(A^+(B_n) \times A^+(B_n)) \cup \{(0, 0)\}$
\end{enumerate}
Hence, $\mathcal{N}$ and $\{0\}$ are the only ideals of the near-semiring $\mathcal{N}$.
\end{theorem}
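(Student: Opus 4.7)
The plan is to classify every congruence of $\mathcal{N}$; the ideal statement will then follow since each ideal equals the $[0]$-class of its kernel congruence. The three listed relations are readily seen to be congruences: the first two are trivial, and for the third its two classes $A^+(B_n)$ and $\{0\}$ are each closed under $+$ (as $A^+(B_n)$ is an additive subsemigroup and $0$ is the additive identity) and under $\cdot$ (as $A^+(B_n)$ is multiplicatively closed and $0$ is absorbing on both sides). It remains to show no further congruences exist.

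Let $\sim$ be an arbitrary congruence of $\mathcal{N}$, and split on whether $0$ is $\sim$-related to any nonzero element. If $0 \sim f$ with $f \neq 0$, then right multiplicative compatibility gives $\xi_\vartheta = f\xi_\vartheta \sim 0\xi_\vartheta = 0$. For every $h \in A^+(B_n)$, additive compatibility then yields $h = h + 0 \sim h + \xi_\vartheta = \xi_\vartheta$, because $\xi_\vartheta$ is absorbing under $+$ in $A^+(B_n)$; combined with $\xi_\vartheta \sim 0$, this forces $h \sim 0$ for every $h \in A^+(B_n)$, whence $\sim = \mathcal{N} \times \mathcal{N}$.

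Otherwise $[0]_\sim = \{0\}$; assuming also that $\sim$ is not equality, some distinct $f, g \in A^+(B_n)$ satisfy $f \sim g$. Picking $c \in B_n$ with $cf \neq cg$, left multiplicative compatibility produces two distinct related constant maps $\xi_{cf} = \xi_c f \sim \xi_c g = \xi_{cg}$, call them $\xi_{c_1} \sim \xi_{c_2}$. If one of $c_1, c_2$ is $\vartheta$, the other automatically yields a full support constant map related to $\xi_\vartheta$, and \lref{l.cong-sg} concludes. Otherwise, writing $c_i = (p_i, q_i)$, either $p_1 \neq p_2$ or $q_1 \neq q_2$: in the first subcase, adding $\xi_{(s, p_1)}$ on the left to both sides gives $\xi_{(s, q_1)} \sim \xi_\vartheta$, and in the second, adding $\xi_{(q_1, t)}$ on the right gives $\xi_{(p_1, t)} \sim \xi_\vartheta$, with the Brandt addition rule collapsing the mismatched side in each case. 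In every subcase a full support constant map is $\sim$-related to $\xi_\vartheta$, so \lref{l.cong-sg} identifies $\sim$ with the third listed relation. Reading off the three resulting $[0]$-classes as $\{0\}, \mathcal{N}, \{0\}$ gives $\{0\}$ and $\mathcal{N}$ as the only ideals. The main obstacle will be the final constructive step: from two arbitrary distinct related constant maps, extracting a full support constant related to $\xi_\vartheta$ by one judiciously chosen Brandt-sum before \lref{l.cong-sg} can be invoked.
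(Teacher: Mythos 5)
Your proof is correct, and its core reduction is genuinely different from (and slicker than) the paper's. Both arguments funnel everything into \lref{l.cong-sg}, and your handling of the case $0\sim f$ (forcing $\xi_{\vartheta}\sim 0$ and then $h=h+0\sim h+\xi_{\vartheta}=\xi_{\vartheta}$ for all $h$, hence the universal relation) is the same computation the paper uses in \tref{5.t.rideal}, though curiously the paper's proof of \tref{5.t.congs} never states this case explicitly. Where you diverge is the main step: the paper's \emph{Claim 2} runs through six cases indexed by the support types of the two related elements $f\sim g$ (singleton, $n$-support, full support, and the mixed pairs), in each case hand-crafting a sum or product that lands some element on $\xi_{\vartheta}$; you instead left-compose with $\xi_c$ at a point $c$ where $cf\neq cg$, which collapses the entire problem to two distinct related constant maps $\xi_{c_1}\sim\xi_{c_2}$ in one stroke, after which a single Brandt-addition computation ($\xi_{(s,p_1)}+{}$ on the left when $p_1\neq p_2$, or ${}+\xi_{(q_1,t)}$ on the right when $q_1\neq q_2$) produces a full-support constant map related to $\xi_{\vartheta}$. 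Your uniformization buys a substantially shorter proof with no case explosion and makes transparent why only the function values, not the support structure, matter; the paper's version buys only the explicitness of seeing the witnessing elements in each support class. Both are complete; your version is the one I would keep.
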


\begin{proof}
In the sequel, we prove the theorem through the following claims.

\textit{Claim 1}: Let $\sim$ be a nontrivial congruence over the near-semiring $\mathcal{N}$ and $f \in \mathcal{N} \setminus \{0, \xi_{\vt}\}$. If $f \sim \xi_{\vt}$, then $\sim \; = (A^+(B_n) \times A^+(B_n)) \cup \{(0, 0)\}$.

\textit{Proof}: First note that $(A^+(B_n) \times A^+(B_n)) \cup \{(0, 0)\}$ is a congruence relation of the near-semiring $\mathcal{N}$. If $f \in A^+(B_n)_{n^2 + 1}$, since $\sim$ is a congruence of the semigroup $(\mathcal{N}, +)$, by Lemma \ref{l.cong-sg}, we have the result. Otherwise, we reduce the problem to Lemma \ref{l.cong-sg} in the following cases.

\begin{description}
\item[{\rm\em Case 1.1}] $f$ is of singleton support. Let $f$ = $\sis {(k, l)}{(p, q)}$. Since $\sis {(k, l)}{(p, q)} \sim \xi_{\vt}$ we have \[\xi_{(k, l)}\;\sis {(k, l)}{(p, q)} \sim \xi_{(k, l)}\xi_{\vt}\] so that $\xi_{(p, q)} \sim \xi_{\vt}$.

\item[{\rm\em Case 1.2}] $f$ is of  $n$-support. Let $f = (p, q; \sigma)$. Since $(p, q; \sigma) \sim \xi_{\vt}$ we have \[\xi_{(k, p)}(p, q; \sigma) \sim \xi_{(k, p)}\xi_{\vt}\] so that $\xi_{(k\sigma, q)} \sim \xi_{\vt}$.
\end{description}

\textit{Claim 2}: If two nonzero elements are in one class under a nontrivial congruence over $\mathcal{N}$, then the congruence is $(A^+(B_n) \times A^+(B_n)) \cup \{(0, 0)\}$.

\textit{Proof}:
Let $f, g \in \mathcal{N}\setminus \{0\}$ such that $f \sim g$ under a congruence $\sim$ over $\mathcal{N}$. If $f$ or $g$ is equal to $\xi_{\vt}$, then by \textit{Claim 1}, we have the result. Otherwise, we consider the following six cases classified by the supports of $f$ and $g$. In each case, we show that there is an  element $h \in A^+(B_n)\setminus \{\xi_{\vt}\}$  such that $h \sim \xi_{\vt}$ so that the result follows from \textit{Claim 1}.
\begin{description}
  \item [{\rm\em Case 2.1}] $f, g \in A^+(B_n)_1$.
  Let $f$ = $\sis {(i, j)}{(k, l)}$ and $g$ = $\sis {(s, t)}{(u, v)}$. If $(i, j) \ne (s, t)$, we have \[\xi_{\vt} =\; \sis{(i, j)}{(k, l)} + \sis{(s, t)}{(v, v)} \sim \; \sis {(s, t)}{(u, v)} +\; \sis{(s, t)}{(v, v)} =\; \sis{(s, t)}{(u, v)}.\]

   Otherwise, $(i, j) = (s, t)$ so that $(k, l) \ne (u, v)$. Now, if $k \ne u$, then we have \[\sis {(i, j)}{(k, l)} =\; \sis{(i, j)}{(k, k)} +\; \sis{(i, j)}{(k, l)} \sim \;\sis{(i, j)}{(k, k)} +\; \sis {(i, j)}{(u, v)} = \xi_{\vt}.\] Similarly, if $l \ne v$, we have \[\xi_\vt =\; \sis{(i, j)}{(k, l)} +\; \sis{(i, j)}{(v, v)} \sim\; \sis {(i, j)}{(u, v)} +\; \sis{(i, j)}{(v, v)} = \; \sis{(i, j)}{(u, v)}.\]

   \item [{\rm\em Case 2.2}] $f, g \in A^+(B_n)_{n^2+1}$.
   Let $f = \xi_{(k, l)}$ and  $g = \xi_{(u, v)}$. By considering full support maps whose images are the same as in various subcases of {\em Case 1}, we can show that there is an element in $A^+(B_n)\setminus \{\xi_{\vt}\}$ that is related to $\xi_\vt$ under $\sim$.

   \item [{\rm\em Case 2.3}] $f, g \in A^+(B_n)_n$.
   Let $f = (i, j; \sigma)$ and $g = (k, l; \rho)$. If $l \ne j$, then \[(i, j; \sigma) = (i, j; \sigma) + \xi_{(j, j)} \sim (k, l; \rho) + \xi_{(j, j)} = \xi_{\vt}.\] Otherwise, we have $(i, j; \sigma) \sim (k, j; \rho)$. Now, if $i \ne k$, then \[ \xi_\vt = (k, k; id)(i, j; \sigma) \sim (k, k; id)(k, j; \rho) = (k, j; \rho).\] In case $i = k$, we have $\sigma \ne \rho$. Thus, there exists $t \in [n]$ such that $t \sigma \ne t \rho$. Now, $(i, j; \sigma) \sim (i, j; \rho)$ implies $\xi_{(k, i)}(i, j; \sigma) \sim \xi_{(k, i)}(i, j; \rho)$, i.e. $\xi_{(k\sigma, j)} \sim \xi_{(k\rho, j)}$. Consequently, \[\xi_{(k\sigma, j)} = \xi_{(k\sigma, k\sigma)} + \xi_{(k\sigma, j)} \sim \xi_{(k\sigma, k\sigma)} + \xi_{(k\rho, j)} = \xi_{\vt}.\]

   \item [{\rm\em Case 2.4}] $f \in A^+(B_n)_1, g \in A^+(B_n)_{n^2+1}$.
   Let $f =\; \sis {(k, l)}{(p, q)}$ and $g = \xi_{(i, j)}$. Now, for $(s, t) \ne (k, l)$, we have \[\xi_\vt = \xi_{(s, t)}f \sim \xi_{(s, t)}g = \xi_{(i, j)}.\]

   \item [{\rm\em Case 2.5}] $f \in A^+(B_n)_{n^2+1}, g \in A^+(B_n)_n$.
   Let $f = \xi_{(p, q)}$ and $g = (i, j; \sigma)$. Now, for $l \ne i$, we have \[\sis {(k, l)}{(p, q)} =\; \sis {(k, l)}{(p, p)} +  f \sim\; \sis {(k, l)}{(p, p)} + g = \xi_{\vt}.\]

   \item [{\rm\em Case 2.6}] $f \in A^+(B_n)_1, g \in A^+(B_n)_n$.
   Let $f =\; \sis {(k, l)}{(p, q)}$ and $g = (i, j; \sigma)$. Now, for $l \ne i$, we have \[\xi_\vt = \xi_{(i, i)}f \sim \xi_{(i, i)}g = \xi_{(i\sigma, j)}.\]

\end{description}
\end{proof}

\end{document}